\newtheorem{thm}{Theorem}[section]
\newtheorem{lem}[thm]{Lemma}
\newtheorem{conj}[thm]{Conjecture}
\newtheorem{rem}{Remark}
\newenvironment {Proof of 1}{\noindent {\bf Proof of Theorem \ref{k23}.}}{\hfill\ensuremath{\square}}
\newenvironment {Proof of 2}{\noindent {\bf Proof of Theorem \ref{thm-main}.}}{\hfill\ensuremath{\square}}
\title{
   A note on median eigenvalues of subcubic graphs \thanks{Partly supported by the National Natural Science Foundation of China (Nos.12371354, 11971311, 12161141003) and Science and Technology Commission of Shanghai Municipality (No.~22JC1403602),  National Key R\&D Program of China under Grant No. 2022YFA1006400 and the Fundamental Research Funds for the Central Universities.}
}
\author{Yuzhenni Wang, Xiao-Dong Zhang$^\dag$\\
School of Mathematical Sciences, MOE-LSC, SHL-MAC\\
Shanghai Jiao Tong University, Shanghai 200240, P. R. China
}
\date{}
\begin{document}
\maketitle
\renewcommand{\thefootnote}{}
\footnote{$^\dag$Corresponding author: xiaodong@sjtu.edu.cn.}
	\footnote{E-mail address:
	wangyuzhenni@sjtu.edu.cn(Y. Wang),
	xiaodong@sjtu.edu.cn (X.-D. Zhang)}
\begin{abstract}
 Let $G$ be an simple graph of order $n$ whose adjacency eigenvalues are   $\lambda_1\ge\dots\ge\lambda_n$. The HL--index of $G$ is defined to be
 $R(G)= \max\{|\lambda_{h}|, |\lambda_{l}|\}$ with
  $h=\left\lfloor\frac{n+1}{2}\right\rfloor$ and $ l=\left\lceil\frac{n+1}{2}\right\rceil.$   Mohar  conjectured that  $R(G)\le 1$ for every planar subcubic graph $G$.  In this note, we prove that Mohar's Conjecture holds for every $K_4$-minor-free  subcubic graph. 
  In addition, $R(G)\le 1$ for every  subcubic graph $G$ which contains a subgraph $K_{2,3}$.
\end{abstract}

{\it Key words:} Median eigenvalue; HL-index; subcubic  graphs;   $K_4$-minor-free graphs;
series--parallel graphs.

{\it AMS Classification:} 05C50; 05C83\\
\section{Introduction}

Let $G=(V(G), E(G))$ be a simple graph  with vertex set $V(G)$ and edge set $E(G)$.  Denote by $|G|$  the order of graph $G$ and $N_G(v)$ the   neighborhood of a vertex $v$ of $G$, and write $N(v)$ when $G$ is clear. For a subset $A\subseteq V(G),$ denote by  $G[A]$  the subgraph of $G$ induced by  $A$, and write $G-A$ for $G[V(G)-A]$ sometimes.

A graph is called  {\it subcubic} if its maximum degree is at most 3. In mathematical chemistry, every subcubic graph is regarded as a chemical graph. It is known that the HOMO--LUMO separation, which is the gap between the Highest Occupied Molecular Orbital(HOMO) and Lowest Unoccupied Molecular Orbital (LUMO),  is related linearly to median eigenvalues of a graph \cite{Fowler2010,Fowler2010-2}. Therefore, it is worthwhile to estimate the median eigenvalues. Fowler and Pisanski \cite{Fowler2010,Fowler2010-2} introduced the notion of HL--index of a graph (see also Jakli\u{c} et al.\cite{Jaklic2012}). For a  simple graph $G$ of order $n$,  let  $\lambda_i(G)$  be  the $i$-th largest eigenvalue of the adjacency matrix of $G$ (counting multiplicities).
 The HL-index of $G$ is defined as
$$R(G)=\max\left\{|\lambda_{h}(G)|, |\lambda_{l}(G)|\right\},$$
 where $h=\left\lfloor\frac{n+1}{2}\right\rfloor$ and $ l=\left\lceil\frac{n+1}{2}\right\rceil.$

In 2015, Mohar \cite{Mohar2015} proved that  $R(G)\le \sqrt{2}$ for every subcubic graph $G$.   Further, he proposed the following conjecture.
\begin{conj}\cite{Mohar2015}\label{con1}
For every subcubic planar graph $G$, $R(G)\le 1$.
\end{conj}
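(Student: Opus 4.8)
The plan is to recast the bound $R(G)\le 1$ as the statement that the two central eigenvalues lie in $[-1,1]$, and to attack each side by the Courant--Fischer min--max principle. Writing $A=A(G)$, the inequality $\lambda_h(A)\le 1$ holds if and only if there is a subspace $U\subseteq\mathbb{R}^{V(G)}$ of dimension $n-h+1=l=\lceil\frac{n+1}{2}\rceil$ on which $x^{\top}Ax\le\|x\|^{2}$, and symmetrically $\lambda_l(A)\ge -1$ holds iff there is a subspace $W$ of the same dimension $l$ on which $x^{\top}Ax\ge -\|x\|^{2}$. Since $x^{\top}Ax=2\sum_{uv\in E(G)}x_ux_v$, both reduce to edge-sum inequalities that must be certified on an (essentially) half-dimensional subspace. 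The guiding observation is that a perfect matching already has all eigenvalues equal to $\pm1$, so matchings are exactly the tight configurations; accordingly I would build the two test subspaces from a suitable spanning substructure of $G$ (a near-perfect matching together with a controlled orientation of the remaining edges) and use planarity to bound the contribution of the non-matching edges.

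It is cleaner to track the interval counts $z_{+}=\#\{i:\lambda_i>1\}$ and $z_{-}=\#\{i:\lambda_i<-1\}$, since $R(G)\le 1$ is equivalent to $z_{+}\le h-1$ and $z_{-}\le n-l$, and these counts behave additively across separations whereas the HL-index itself does not (it is not even additive under disjoint union). First I would reduce to the $3$-connected case: disjoint unions and $1$- and $2$-cuts are handled by gluing the pieces while summing the $z_{\pm}$ contributions, and any piece that is $K_4$-minor-free, or that contains a $K_{2,3}$, is disposed of by the two results already established in this note. This leaves the $3$-connected subcubic planar graphs, where planarity supplies the extra leverage missing from Mohar's general $\sqrt2$ bound: by subcubicity $|E(G)|\le\frac{3}{2}|V(G)|$ with equality exactly in the cubic case, while Euler's formula constrains the face structure and forces many short faces.

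For the $3$-connected core the plan is to prove the edge-sum inequality by a discharging argument on a fixed planar embedding. Concretely, I would attempt to decompose $I-A$ (for the upper side) as a sum of small positive semidefinite blocks indexed by the faces, plus an error term supported on few vertices; planarity should guarantee that these face-blocks overlap in a bounded, tree-like pattern, so that their combined negative inertia -- which is exactly the count $z_{+}$ to be controlled -- telescopes to at most $\lfloor n/2\rfloor$. A charge $1$ is assigned to each face and redistributed to vertices according to the local eigenvalue excess, and Euler's formula is then invoked to show the total charge cannot exceed $h-1$. The inequality for $A+I$, giving $z_{-}\le n-l$, would follow by the identical argument applied to the signed counterpart.

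The hard part, and the reason only the two special cases are settled in this note, is precisely this $3$-connected core. Interlacing is too blunt an instrument: planar graphs only guarantee vertex separators of size $\Theta(\sqrt n)$, whereas moving a single median eigenvalue by deletion requires separators of \emph{bounded} size, so one cannot simply peel a general $3$-connected planar graph down to the solved instances. Combined with the non-monotonicity of the HL-index under vertex and edge deletion, no naive induction closes. The decisive and most delicate step is therefore the construction of the half-dimensional test subspace -- equivalently, the face-indexed positive semidefinite decomposition of $I\mp A$ -- that simultaneously respects the global planar embedding and certifies $z_{\pm}\le\lfloor n/2\rfloor$ with no slack beyond the tight matching configuration. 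Making the face charges balance for cubic $3$-connected (polyhedral) graphs is where a genuinely new idea is needed; absent it, the conjecture is confirmed here only for the $K_4$-minor-free subcubic graphs and for the subcubic graphs containing $K_{2,3}$.
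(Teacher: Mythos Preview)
The statement you are addressing is a \emph{conjecture}; the paper does not prove it. What the paper establishes are the two partial cases recorded as Theorems~\ref{k23} and~\ref{thm-main}, and you yourself say as much in your final sentence. So there is no ``paper's proof'' to compare against, and your proposal is, by your own admission, not a proof either: the entire weight rests on the $3$-connected cubic planar core, for which you offer only the \emph{hope} that a face-indexed positive semidefinite decomposition of $I\mp A$ with a discharging argument will work. No candidate charge rule, no candidate face block, and no indication of why Euler's formula should force $z_\pm\le\lfloor n/2\rfloor$ is given; ``a genuinely new idea is needed'' is an honest assessment, but it is not a proof step.

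A smaller caution on the reduction you sketch: the claim that $z_\pm$ ``behave additively across separations'' is only literally true for disjoint unions. Across $1$- and $2$-cuts one must delete the cut vertices, invoke interlacing (Theorem~\ref{thm2}) with a shift of $1$ or $2$, and then control the parities of the pieces via Lemma~\ref{lem-odd} and an inductive hypothesis --- this is exactly what the paper does in Cases~1 and~2 of the proof of Theorem~\ref{thm-main}, and it requires care (for instance, ensuring the pieces are nonempty and of the right parity). That machinery does plausibly reduce the general planar conjecture to the $3$-connected cubic case, but it does not itself dispose of any $3$-connected piece: such a graph is neither $K_4$-minor-free nor need it contain a $K_{2,3}$ subgraph (e.g.\ $K_4$ itself, or the dodecahedron), so Theorems~\ref{k23} and~\ref{thm-main} do not apply and you are left precisely where the open problem lies.
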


Mohar  \cite{Mohar2013}  confirmed that Conjecture \ref{con1} holds for  bipartite subcubic planar graphs. Later,  Mohar\cite{Mohar2016} proved that $R(G)\le 1$ holds for every bipartite subcubic graph $G$ except the Heawood graph, whose median eigenvalues are $\pm \sqrt{2}$. 
 In addition, Benediktovich\cite{Benediktovich2014} confirmed Conjecture \ref{con1} for subcubic outerplanar graphs.  For more results on median eigenvalues, see  \cite{Li2013,MT2015,Ye2017,Wu2018}.

 A graph $H$ is a {\it minor} (or {\it $H$-minor}) of a graph $G$, if a copy of $H$ can be obtained from $G$ by deleting and/or contracting edges of $G$.
A graph is {\it $\mathcal{H}$-minor-free} if $H$ is not a minor of it for every $H\in \mathcal{H}$. When $\mathcal{H}=\{H\}$, we simply write {\it $H$-minor-free}. In this note, we prove the following result.
\begin{thm}\label{k23}
	For every  subcubic graph $G$ that  contains a subgraph $K_{2,3}$, we have $R(G)\le 1$.
\end{thm}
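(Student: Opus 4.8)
The plan is to convert $R(G)\le 1$ into an inertia statement and then exploit the rigidity that a $K_{2,3}$ forces on a subcubic graph. Write the $K_{2,3}$ with parts $\{a,b\}$ and $\{x,y,z\}$. Since $\Delta(G)\le 3$ and $a,b$ already use three edges inside the $K_{2,3}$, necessarily $\deg_G(a)=\deg_G(b)=3$, $N_G(a)=N_G(b)=\{x,y,z\}$, and $a\not\sim b$. Thus $a$ and $b$ are non-adjacent twins, so $A(G)(e_a-e_b)=0$: the graph carries a canonical $0$-eigenvector supported on $\{a,b\}$. (Also $\{x,y,z\}$ spans at most one edge, so in every case $x,y,z$ have degree at most $1$ in $G-\{a,b\}$.)

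Second, reformulate the target. Put $A=A(G)$ and recall $h=\lfloor(n+1)/2\rfloor$, $l=\lceil(n+1)/2\rceil$, $h+l=n+1$. Then $R(G)\le 1$ is equivalent to $\lambda_h(A)\le 1$ and $\lambda_l(A)\ge -1$, i.e.\ to $A-I$ having at most $h-1=\lfloor(n-1)/2\rfloor$ positive eigenvalues and $A+I$ having at most $\lfloor(n-1)/2\rfloor$ negative eigenvalues. Since a matrix that is negative semidefinite on a subspace $W$ has at most $n-\dim W$ positive eigenvalues, it therefore suffices to exhibit subspaces $W^{-},W^{+}\subseteq\mathbb R^{n}$, each of dimension at least $\lceil(n+1)/2\rceil$, with $v^{\top}Av\le\|v\|^{2}$ on $W^{-}$ and $v^{\top}Av\ge-\|v\|^{2}$ on $W^{+}$.

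Third, build these subspaces from the twin vector. For $W^{-}$, take $\mathrm{span}(e_a-e_b)$ together with the span of all eigenvectors of $A(G-\{a,b\})$ belonging to eigenvalues $\le 1$, the latter extended by zero on $\{a,b\}$; these pieces are orthogonal, so $\dim W^{-}=1+n_{\le 1}(G-\{a,b\})$. For $v=s(e_a-e_b)+v'$ in $W^{-}$ split the quadratic form over the edge set: the only edges joining $\{a,b\}$ to the rest are the six $K_{2,3}$-edges, and they contribute $2(v_a+v_b)(v'_x+v'_y+v'_z)=0$ because $v_a+v_b=0$; the edges inside $V\setminus\{a,b\}$ contribute $(v')^{\top}A(G-\{a,b\})v'\le\|v'\|^{2}$; there are no edges inside $\{a,b\}$. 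Hence $v^{\top}Av\le\|v'\|^{2}\le\|v'\|^{2}+2s^{2}=\|v\|^{2}$. The mirror construction gives $\dim W^{+}=1+n_{\ge -1}(G-\{a,b\})$. So everything reduces to
$$n_{\le 1}\bigl(G-\{a,b\}\bigr)\ \ge\ \Bigl\lceil\tfrac{n-1}{2}\Bigr\rceil \quad\text{and}\quad n_{\ge -1}\bigl(G-\{a,b\}\bigr)\ \ge\ \Bigl\lceil\tfrac{n-1}{2}\Bigr\rceil ,$$
i.e.\ to $R(G-\{a,b\})\le 1$ for the subcubic graph $G-\{a,b\}$, in which the three vertices $x,y,z$ have degree $\le 1$.

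Finally, one must prove that every subcubic graph $H$ with three vertices of degree at most $1$ satisfies $R(H)\le 1$; this is exactly where the hypothesis ``$K_{2,3}\subseteq G$'' does its work, since the only known subcubic obstruction to $R\le 1$, the Heawood graph, is cubic. I would argue by induction on $|V(H)|$, clearing the low-degree vertices: an isolated vertex only inserts a $0$ into the spectrum and can be deleted after reindexing; a pendant vertex $v$ with neighbour $v'$ can be removed together with $v'$ at the cost of only one unit in each of $n_{>1}$ and $n_{<-1}$ — apply the same device to the two-dimensional space $\mathrm{span}(e_v,e_{v'})$, on which $A(H)-I$ is negative semidefinite, paired with auxiliary eigenvectors forced to have zero sum on $N_H(v')\setminus\{v\}$; and whenever a $K_{2,3}$ re-appears, the reduction of the previous paragraph applies and strictly lowers the order, with Mohar's theorem on bipartite subcubic graphs and a finite check of small graphs handling the base. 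The main obstacle is the combinatorial bookkeeping through these reductions — when the attachment points $x',y',z'$ coincide, when $G$ is disconnected, and when a reduction spawns further low-degree vertices — so that the accumulated ``$+1$''s never exceed $\lfloor(n-1)/2\rfloor$; an equivalent route is to run the whole argument on the weighted quotient $G/\!/ab$, in which $x,y,z$ have degree at most $2$.
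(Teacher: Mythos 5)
Your opening reductions are sound: the subcubic condition does force $N_G(a)=N_G(b)=\{x,y,z\}$ with $a\not\sim b$, the inertia reformulation of $R(G)\le 1$ is correct, and the subspace $W^-=\mathrm{span}(e_a-e_b)\oplus\{\text{eigenvectors of }G-\{a,b\}\text{ with eigenvalue}\le 1\}$ does satisfy $v^{\top}Av\le\|v\|^{2}$, since $v_a+v_b=0$ kills the cross terms. But this only trades the theorem for the claim that every subcubic graph with three vertices of degree at most $1$ satisfies $R\le 1$, and that claim is where the proof actually has to happen --- you have not proved it, and the induction you sketch cannot close. Each pendant-pair deletion (or $K_{2,3}$-twin deletion) consumes the low-degree vertices that justify it: after removing $a,b$ and then the three pendant pairs $\{x,x'\},\{y,y'\},\{z,z'\}$, what remains is an essentially arbitrary subcubic graph with minimum degree $\ge 2$ and no $K_{2,3}$, so it lies outside your inductive class. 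For such graphs the best known general bound is Mohar's $R(G)\le\sqrt{2}$; whether $R\le 1$ holds for all subcubic graphs other than the Heawood graph is open, and Mohar's bipartite theorem does not apply since nothing forces the residual graph to be bipartite. You flag the ``bookkeeping'' as the remaining obstacle, but the problem is not bookkeeping: the reduction bottoms out at the hard open case.

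The paper avoids all of this by never deleting vertices. It takes an unfriendly partition $\{A,B\}$ (Lemma~\ref{thm1}) and observes that, because $G$ is subcubic, $x_1,x_2$ must lie in the same part and then $y_1,y_2,y_3$ in the other; hence the bipartite spanning subgraph $G_1$ on the cross edges contains the twins $x_1,x_2$ and has $\lambda_h(G_1)=0$ by Lemma~\ref{lem1}, while the spanning subgraph $G_2$ on the intra-part edges is a matching, so $|\lambda_i(G_2)|\le 1$. Theorem~\ref{thm3} then gives $\lambda_h(G)\le\lambda_h(G_1)+\lambda_1(G_2)\le 1$ and symmetrically $\lambda_l(G)\ge -1$. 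Your twin-eigenvector observation is the same germ as Lemma~\ref{lem1}, but the missing idea is to split the \emph{edge set} via an unfriendly partition and add spectra (Weyl), rather than to split the \emph{vertex set} and recurse.
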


 In electronic engineering and computer science,  $K_4$-minor-free graphs,  which are also called  series--parallel graphs, are of great interest,  because these graphs  can be applied to model series and parallel electric circuits. More results on series--parallel graphs may be referred to   \cite{Dissaux2023,Eppstein1992} and references therein. 
 Another main result of this note is as follows.
\begin{thm}\label{thm-main}
For every   $K_4$-minor-free subcubic graph $G$, we have  $R(G)\le 1.$
\end{thm}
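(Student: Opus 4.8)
Before turning to the authors' argument, here is the route I would take. It is convenient to restate the goal: writing $n_+$ (resp. $n_-$) for the number of eigenvalues of $G$ that are strictly larger than $1$ (resp. strictly smaller than $-1$), one has $R(G)\le 1$ if and only if $\lambda_h(G)\le 1$ and $\lambda_l(G)\ge -1$, i.e. if and only if $\max\{n_+,n_-\}\le\lfloor (n-1)/2\rfloor$. By Sylvester's law of inertia $n_+$ is the largest dimension of a subspace on which $q_+(x)=x^{\top}Ax-\|x\|^2$ is positive definite, so it is enough to produce a subspace $W_+$ of dimension at least $\lceil (n+1)/2\rceil$ on which $q_+\le 0$, and symmetrically a subspace $W_-$ of the same dimension on which $x^{\top}Ax+\|x\|^2\ge 0$. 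I would keep this inertia viewpoint in reserve for base cases, but run the main argument by induction on $|G|$.

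The plan is to make the usual preliminary reductions first. If $G$ is disconnected, or has a cut vertex, split off a block; if $G$ has a vertex of degree at most $1$ or a pendant path, delete it and combine Cauchy interlacing with the pendant recursion $\phi_G(x)=x\,\phi_{G-u}(x)-\phi_{G-u-v}(x)$ (for a pendant vertex $u$ at $v$), bookkeeping how $h$ and $l$ move as the order drops. This lets me assume $G$ is $2$-connected with all degrees in $\{2,3\}$. If $G$ is a cycle its spectrum is explicit and the conclusion is immediate. And if $G$ contains a subgraph $K_{2,3}$ then Theorem \ref{k23} already yields $R(G)\le 1$, so I may further assume $G$ has no $K_{2,3}$ subgraph and is not a cycle.

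For the structural core I would use that a $2$-connected $K_4$-minor-free graph of minimum degree $2$ that is not a cycle has a branch (degree-$3$) vertex, and that suppressing the degree-$2$ vertices along the internal paths turns it into a $2$-connected cubic series--parallel multigraph; such a multigraph must contain two parallel edges, which in $G$ means there are two internally disjoint paths $P,Q$ joining branch vertices $a,b$ whose interiors are all of degree $2$, with $a$ and $b$ each carrying exactly one further edge. I would then show that, for a $2$-connected $K_4$-minor-free subcubic graph with no $K_{2,3}$ subgraph that is not a cycle, at least one of a short list of local configurations must occur — for instance a triangle two of whose vertices have degree $2$, a pair of adjacent degree-$2$ vertices, a degree-$2$ vertex whose two neighbours are adjacent, or one of $P,Q$ having length $\ge 3$ — and for each configuration produce a strictly smaller graph $G'$ still in the class (by deleting one or two vertices, or by suppressing a degree-$2$ vertex and compensating with a pendant edge or a small weight that is afterwards absorbed through the $q_\pm$ picture), apply the inductive hypothesis to $G'$, and recover $n_+(G),n_-(G)\le\lfloor (n-1)/2\rfloor$ by interlacing. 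The finitely many graphs not reachable by any such reduction are checked directly, the tight example $K_4-e$ among them.

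The hard part will be twofold. First, the bound is tight — $K_4-e$ already attains $R=1$ — so none of the interlacing steps can afford any slack, which forces the reductions to be chosen so that the median indices of $G$ and $G'$ align exactly; in practice this means deleting vertices in pairs, or otherwise keeping careful control of the parity of $|G|$, and separating the $n_+$ and $n_-$ bookkeeping since $G$ need not be bipartite. Second, one must verify that the list of reducible configurations is genuinely exhaustive for $2$-connected, $K_{2,3}$-subgraph-free, $K_4$-minor-free subcubic graphs that are not cycles; this is the step that really uses the series--parallel structure, and obtaining a clean finite case analysis there — rather than an unwieldy one — is, I expect, the crux of the proof.
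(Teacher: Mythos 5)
Your overall framework --- induction on the order, reducing $R(G)\le 1$ to counting eigenvalues strictly greater than $1$ and strictly less than $-1$, and recovering the bound for $G$ from smaller graphs by Cauchy interlacing --- is exactly the skeleton the paper uses. But the structural core of your plan is left as an acknowledged gap, and it is precisely the part that constitutes the proof. You propose to show that every $2$-connected, $K_4$-minor-free, $K_{2,3}$-subgraph-free subcubic non-cycle contains one of a ``short list of local configurations,'' each admitting a reduction to a smaller graph in the class; you give the list only as ``for instance,'' you do not prove it exhaustive, and you do not verify that any single reduction actually closes the induction with the required zero slack. Worse, one of your proposed reductions (suppressing a degree-$2$ vertex and ``compensating with a small weight'') leaves the class of unweighted subcubic $K_4$-minor-free graphs, so the inductive hypothesis would not apply to the resulting object without a separate weighted version of the whole theorem. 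As written, the proposal is a plausible research plan, not a proof.

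For comparison, the paper avoids the configuration analysis entirely. In the $2$-connected case it takes a longest cycle $C$ and a path $P$ off $C$ with ends $u,v\in V(C)$, and uses $K_4$-minor-freeness to show that $G-\{u_2,v\}$ is disconnected (where $u_2$ is a suitable neighbour of $u$ on $C$); this produces a $2$-vertex cut $\{x,v\}$ with $x\in\{u,u_2\}$ chosen so that \emph{both} sides of the cut have even order. The induction hypothesis and the odd-order lemma (Lemma~\ref{lem-odd}) then give $n_1(G-\{x,v\})\le \frac{n-6}{2}$, and interlacing after deleting two vertices yields $\lambda_{n/2}(G)\le\lambda_{n/2-2}(G-\{x,v\})\le 1$, with the symmetric argument for $\lambda_{n/2+1}(G)\ge -1$. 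The parity control you correctly identify as delicate is handled there by a single choice of which endpoint to delete, not by a case analysis. If you want to salvage your route, the missing work is exactly the exhaustiveness of your configuration list together with a per-configuration interlacing computation; the paper's single-cut argument is the cleaner way around it.
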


\begin{rem}
On one hand, Theorem \ref{thm-main} extends the result for subcubic outplanar graphs in \cite{Benediktovich2014}, since  $G$ is outerplanar if and only if neither $K_4$ nor $K_{2,3}$ is a minor of $G$.
 On the other hand, Theorem \ref{thm-main}  confirms that Conjecture~\ref{con1} holds for  $K_4$-minor-free subcubic graphs, as every $K_4$-minor-free graph is planar, equivalently,
 $\{K_5,K_{3,3}\}$-minor-free.
\end{rem}

 The rest of this note is organized as follows. In Section 2, some notations and lemmas are presented. In Section 3, we prove  Theorems \ref{k23} and \ref{thm-main}.
 \section{Preliminaries }
 First, we need the following  two  well-known theorems in spectral graph theory (for example, see  \cite[pp. 17--20]{C2010}).

\begin{thm}[Eigenvalue interlacing theorem]\label{thm2}
		Let $G$ be a simple  graph $G$ of order $n$.
Let $A\subseteq V(G)$ be a vertex set of size $k.$  Then
for  $i\in\{1,2,\cdots,n-k\},$ 
$$\lambda_i(G)\ge \lambda_i(G-A)\ge\lambda_{i+k}(G).
$$

\end{thm}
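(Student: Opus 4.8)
The plan is to recognize Theorem~\ref{thm2} as the adjacency-matrix instance of the Cauchy interlacing theorem for principal submatrices, and to prove it through the Courant--Fischer variational characterization of the eigenvalues of a real symmetric matrix. Write $M=A(G)$ for the $n\times n$ adjacency matrix of $G$ and $B=A(G-A)$ for the adjacency matrix of $G-A$. The starting observation is that $B$ is exactly the principal submatrix of $M$ obtained by deleting the $k$ rows and the $k$ columns indexed by the vertices in $A$. Equivalently, identifying $\mathbb{R}^{n-k}$ (indexed by $V(G)\setminus A$) with the subspace $W=\{x\in\mathbb{R}^n:\, x_v=0 \text{ for all } v\in A\}$ of dimension $n-k$, the zero-padding embedding $x\mapsto\tilde x$ preserves the Rayleigh quotient: $\tilde x^{\top}M\tilde x=x^{\top}Bx$ and $\tilde x^{\top}\tilde x=x^{\top}x$, since every entry of $M$ linking $A$ to $V(G)\setminus A$ (or lying within $A$) is multiplied by a zero coordinate of $\tilde x$. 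This Rayleigh-quotient identity is the crux; the two inequalities then follow by feeding the extremal subspaces for $B$ into the variational formulas for $M$.

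For the upper inequality $\lambda_i(G)\ge\lambda_i(G-A)$, I would use the max--min form
$$\lambda_i(G-A)=\max_{\substack{U\subseteq\mathbb{R}^{n-k}\\ \dim U=i}}\ \min_{0\ne x\in U}\frac{x^{\top}Bx}{x^{\top}x}.$$
Taking an optimal $i$-dimensional $U^{\ast}$ and embedding it as $\tilde U^{\ast}\subseteq W\subseteq\mathbb{R}^n$ (still of dimension $i$, as the embedding is injective), the max--min formula for $M$ over \emph{all} $i$-dimensional subspaces of $\mathbb{R}^n$ gives $\lambda_i(G)\ge\min_{0\ne y\in\tilde U^{\ast}}(y^{\top}My)/(y^{\top}y)=\lambda_i(G-A)$ by the identity above. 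For the lower inequality $\lambda_i(G-A)\ge\lambda_{i+k}(G)$, I would instead use the min--max form
$$\lambda_i(G-A)=\min_{\substack{U\subseteq\mathbb{R}^{n-k}\\ \dim U=n-k-i+1}}\ \max_{0\ne x\in U}\frac{x^{\top}Bx}{x^{\top}x}.$$
Embedding an optimal such $U^{\ast}$ into $W$ yields a subspace of $\mathbb{R}^n$ of dimension $n-k-i+1=n-(i+k)+1$, which is an admissible competitor in the min--max formula for $\lambda_{i+k}(G)$; hence $\lambda_{i+k}(G)\le\lambda_i(G-A)$. The index shift from $i$ to $i+k$ is produced precisely by the drop of the codimension parameter, since $W$ is $k$ dimensions smaller than $\mathbb{R}^n$.

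A cleaner bookkeeping alternative is induction on $k$: it suffices to treat a single vertex deletion, $\lambda_i(G)\ge\lambda_i(G-v)\ge\lambda_{i+1}(G)$, and then delete the $k$ vertices of $A$ one at a time, composing the two chains of inequalities and accumulating a $+1$ index shift at each step to reach $+k$. There is no genuine obstacle here, as the statement is classical; the one thing to get right is the dimension and index accounting in the variational formulas (choosing $\dim U=i$ for the upper bound and $\dim U=n-k-i+1$ for the lower bound, and verifying that the embedded subspaces retain these dimensions). I would also note at the outset that the inequalities are asserted only for $i\in\{1,\dots,n-k\}$, which is exactly the range in which both $\lambda_i(G-A)$ and $\lambda_{i+k}(G)$ are defined, so no degenerate cases arise.
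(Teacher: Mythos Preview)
Your proof is correct and follows the standard Cauchy interlacing argument via the Courant--Fischer variational characterization; the Rayleigh-quotient identity under zero-padding and the dimension accounting are handled properly, and the induction-on-$k$ reduction to single-vertex deletion is a valid alternative presentation.

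Note, however, that the paper does not supply its own proof of this theorem: it is stated as a well-known result in spectral graph theory with a citation to the textbook of Cvetkovi\'{c}, Rowlinson and Simi\'{c}. So there is no ``paper's proof'' to compare against; your argument is exactly the classical one that underlies the cited reference.
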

\begin{thm}\label{thm3}
	Let $G$ be a simple  graph of order $n$. Suppose  $\{E_1,E_2\}$ is a partition of $E(G)$.  For $i\in\{1,2\}$, let $G_i=(V(G),E_i)$ be the spanning graph of $G$.  Then
	\[\lambda
	_i(G)\le \lambda_j(G_1)+\lambda_{i-j+1}(G_2)~(n\ge i\ge j\ge 1),
	\]
	\[\lambda_i(G)\ge \lambda_j(G_1)+\lambda_{i-j+n}(G_2)~(1\le i\le j\le n).
	\]
\end{thm}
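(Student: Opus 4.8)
The plan is to recognize Theorem~\ref{thm3} as the classical Weyl inequality for sums of two real symmetric matrices, specialized to adjacency matrices. The key structural observation is that, since $\{E_1,E_2\}$ partitions $E(G)$ and $G$ is simple, every edge of $G$ contributes to exactly one of the spanning subgraphs $G_1,G_2$, so the adjacency matrices satisfy the additive identity $A(G)=A(G_1)+A(G_2)$. Writing $A=A(G)$, $B=A(G_1)$, $C=A(G_2)$, all three are real symmetric $n\times n$ matrices with $A=B+C$, and $\lambda_i(\cdot)$ denotes the $i$-th largest eigenvalue of the indicated matrix. Thus the whole statement reduces to proving the two Weyl inequalities for $A=B+C$, which I would do via the Courant--Fischer min-max characterization together with an elementary dimension count.

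For the first (upper) inequality with $n\ge i\ge j\ge1$, I would use $\lambda_i(A)=\min_{\dim S=n-i+1}\ \max_{0\ne x\in S}\frac{x^{\top}Ax}{x^{\top}x}$. Let $u_1,\dots,u_n$ and $v_1,\dots,v_n$ be orthonormal eigenbases of $B$ and $C$, ordered by decreasing eigenvalue, and set $k=i-j+1$. Consider $U=\operatorname{span}\{u_j,\dots,u_n\}$ and $V=\operatorname{span}\{v_k,\dots,v_n\}$, of dimensions $n-j+1$ and $n-k+1$; on $U$ the Rayleigh quotient of $B$ is at most $\lambda_j(B)$, and on $V$ that of $C$ is at most $\lambda_k(C)$. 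The standard bound $\dim(U\cap V)\ge\dim U+\dim V-n$ gives $\dim(U\cap V)\ge n-i+1$, so $U\cap V$ contains a subspace $W$ of dimension exactly $n-i+1$ on which $x^{\top}Ax=x^{\top}Bx+x^{\top}Cx\le(\lambda_j(B)+\lambda_k(C))\,x^{\top}x$. Feeding $W$ into the min-max formula yields $\lambda_i(A)\le\lambda_j(B)+\lambda_{i-j+1}(C)$.

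For the second (lower) inequality with $1\le i\le j\le n$, I would argue dually, either by applying the first inequality to $-A=(-B)+(-C)$ and using $\lambda_i(-M)=-\lambda_{n-i+1}(M)$ with a reindexing, or directly through the max-min form $\lambda_i(A)=\max_{\dim S=i}\ \min_{0\ne x\in S}\frac{x^{\top}Ax}{x^{\top}x}$. In the direct version, set $m=i-j+n$ and take $U=\operatorname{span}\{u_1,\dots,u_j\}$ and $V=\operatorname{span}\{v_1,\dots,v_m\}$, of dimensions $j$ and $m$, on which the Rayleigh quotients of $B$ and $C$ are at least $\lambda_j(B)$ and $\lambda_m(C)$ respectively. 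Here $\dim(U\cap V)\ge j+m-n=i$, so an $i$-dimensional subspace of $U\cap V$ fed into the max-min formula gives $\lambda_i(A)\ge\lambda_j(B)+\lambda_{i-j+n}(C)$.

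I do not expect a genuine obstacle here: this is a well-known matrix inequality whose proof is a routine combination of Courant--Fischer with the dimension count $\dim(U\cap V)\ge\dim U+\dim V-n$. The only points that require care are verifying the additive decomposition $A=B+C$ (which relies on $G$ being simple, so that no edge is counted twice) and keeping the index shifts $k=i-j+1$ and $m=i-j+n$ consistent so that the intersection subspace has exactly the dimension demanded by the min-max or max-min formula.
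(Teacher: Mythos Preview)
Your proposal is correct: the identity $A(G)=A(G_1)+A(G_2)$ reduces the statement to Weyl's inequalities for sums of real symmetric matrices, and your Courant--Fischer argument with the dimension bound $\dim(U\cap V)\ge\dim U+\dim V-n$ is the standard and complete proof. The paper itself does not give a proof of this theorem; it states it as a well-known result in spectral graph theory and refers to \cite[pp.~17--20]{C2010}, so there is no in-paper argument to compare against---your approach is exactly the textbook one cited there.
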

We also need a simple fact.
\begin{lem}\label{lem1}
	Let $G$ be a simple graph with two distinct vertices $u$ and $v$ such that $N(u) = N(v)$. Then zero is an adjacency  eigenvalue of $G$. Furthermore, if $G$ is bipartite, then  $R(G)=0$.
\end{lem}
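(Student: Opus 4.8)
The plan is to exhibit an explicit vector in the kernel of the adjacency matrix $A=A(G)$, and then, when $G$ is bipartite, to combine this with the $\pm$-symmetry of the spectrum and a parity count. First I would observe that, since $G$ is simple, $u\notin N(u)=N(v)$, so $u$ and $v$ are non-adjacent. Define $\mathbf{x}\in\mathbb{R}^{V(G)}$ by $x_u=1$, $x_v=-1$, and $x_w=0$ for all other $w$. For any vertex $w$ we then have $(A\mathbf{x})_w=\sum_{z\in N(w)}x_z=x_u[\,w\sim u\,]+x_v[\,w\sim v\,]=[\,w\sim u\,]-[\,w\sim v\,]$, where $[\,\cdot\,]$ equals $1$ if the statement holds and $0$ otherwise; since $N(u)=N(v)$ gives $w\sim u\iff w\sim v$, this is $0$. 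Hence $A\mathbf{x}=\mathbf{0}$ with $\mathbf{x}\neq\mathbf{0}$, so $0$ is an adjacency eigenvalue of $G$ and $\eta(G):=\dim\ker A\ge 1$.

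Now assume $G$ is bipartite. Then its spectrum is symmetric about $0$, i.e.\ $\lambda_i(G)=-\lambda_{n+1-i}(G)$ for each $i$; in particular the nonzero eigenvalues split evenly into positive and negative ones, so their total number is even and therefore $\eta(G)\equiv n\pmod 2$. I would finish by cases on the parity of $n$. If $n$ is odd, then $h=l=\tfrac{n+1}{2}$; since $\eta(G)$ is odd (hence $\ge 1$) the middle index $\tfrac{n+1}{2}$ lies among the zero eigenvalues, so $\lambda_h(G)=0$ and $R(G)=0$. If $n$ is even, then $h=\tfrac n2$ and $l=\tfrac n2+1$; here $\eta(G)$ is even and $\ge 1$, hence $\ge 2$, so the two median positions $\tfrac n2$ and $\tfrac n2+1$ both lie among the zero eigenvalues, giving $\lambda_h(G)=\lambda_l(G)=0$ and again $R(G)=0$.

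There is no serious obstacle here; the lemma is elementary. The only point requiring care is the even-order bipartite case: the explicit null vector alone only yields $\eta(G)\ge 1$, and one genuinely needs the symmetry of the bipartite spectrum to upgrade this to $\eta(G)\ge 2$, which is precisely what forces \emph{both} median eigenvalues, rather than just one, to vanish.
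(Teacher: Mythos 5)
Your proof is correct and follows essentially the same route as the paper: the explicit null vector $e_u-e_v$ is just a restatement of the paper's observation that rows $u$ and $v$ of $A$ coincide (forcing $\det A=0$), and the bipartite case rests on the same $\pm$-symmetry of the spectrum. You merely spell out the parity argument for why the median eigenvalue(s) must vanish, which the paper leaves implicit.
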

\begin{proof}
	The determinant of the adjacency matrix $A$ of $G$  is equal to 0, so zero is an eigenvalue of $A$.
	Furthermore, if $G$ is bipartite, then the adjacency eigenvalues of $A$ is symmetric with respect to origin 0; so $R(G)=0$.
\end{proof}
Let $G$ be a simple graph of order $n$. A partition $\{A,B\}$ of vertex set of $G$ is called {\it unfriendly} if every vertex in $A$ has at least as many neighbors in $B$ as in $A$, and every vertex in $B$ has at least as many neighbors in $A$ as in $B$. A partition $\{A,B\}$ of vertex set of $G$ is called {\it unbalanced} if $|A|\neq |B|$; {\it balanced}, otherwise.
We include Lemma \ref{thm1} which was initially presented in \cite{Aharoni1990},
 and later applied to median eigenvalues by Mohar in \cite{Mohar2013,Mohar2015}.

\begin{lem}\label{thm1}\cite{Aharoni1990}
	Every graph has an unfriendly partition.
\end{lem}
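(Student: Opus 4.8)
The plan is to prove this by an extremal (maximum-cut) argument, which is elementary for finite graphs. Since every graph here has finite order $n$, there are only finitely many partitions of $V(G)$, so among all partitions $\{A,B\}$ there exists one maximizing the number of edges having exactly one endpoint in $A$ and one in $B$; call this quantity the \emph{cut size} $e(A,B)$. First I would fix a partition $\{A,B\}$ attaining the maximum cut size, and then claim that this partition is automatically unfriendly.

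To verify the claim I would argue by contradiction. Suppose $\{A,B\}$ is not unfriendly. Then, by the definition of unfriendliness, some vertex violates the required inequality; without loss of generality there is a vertex $v\in A$ with strictly more neighbors inside $A$ than inside $B$. Writing $d_A(v)$ and $d_B(v)$ for the number of neighbors of $v$ in $A$ and in $B$ respectively, this says $d_A(v)>d_B(v)$. Now consider moving $v$ to the other side, replacing $\{A,B\}$ by $\{A\setminus\{v\},\,B\cup\{v\}\}$. The only edges whose cut status changes are those incident to $v$: every edge from $v$ to $A\setminus\{v\}$ becomes a cut edge, contributing $+d_A(v)$ to the cut size, while every edge from $v$ to $B$ ceases to be a cut edge, contributing $-d_B(v)$. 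Thus the cut size changes by exactly $d_A(v)-d_B(v)>0$, strictly increasing it.

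This contradicts the maximality of the chosen partition, so no such vertex $v$ can exist. Consequently every vertex of $A$ has at least as many neighbors in $B$ as in $A$, and by the symmetric argument every vertex of $B$ has at least as many neighbors in $A$ as in $B$; that is precisely the assertion that $\{A,B\}$ is unfriendly. The only place requiring attention is the existence of a maximizer, which is immediate here since $G$ is finite. I do not expect a genuine obstacle in the finite setting: the entire difficulty of this statement lies in the infinite case, which is exactly what \cite{Aharoni1990} addresses, and finiteness of $G$ removes that difficulty completely.
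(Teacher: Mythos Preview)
Your argument is correct: the maximum-cut partition exists by finiteness, and swapping a violating vertex strictly increases the cut, forcing the maximizer to be unfriendly. This is the standard elementary proof in the finite case, and since all graphs in this paper have finite order, it suffices.

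There is nothing to compare against: the paper does not supply its own proof of this lemma but simply quotes it with a citation to \cite{Aharoni1990}. That reference treats the (far harder) infinite case; for the purposes of this paper your one-paragraph max-cut argument is entirely adequate and, if anything, more self-contained than the bare citation.
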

By applying Theorem  \ref{thm2}, Mohar \cite[Lemma 2.4]{Mohar2013} proved the following lemma.

\begin{lem}\label{lem2} \cite{Mohar2013}
	If $G$ is a subcubic graph with an unbalanced unfriendly partition, then $R(G)\le 1$.
\end{lem}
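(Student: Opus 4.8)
The plan is to exploit the structure that the unfriendly hypothesis forces on the subgraphs induced inside the two parts, together with the surplus of zero eigenvalues that the unbalanced hypothesis produces in the bipartite graph of crossing edges. Write $\{A,B\}$ for the unfriendly partition, and assume without loss of generality that $a:=|A|>|B|=:b$, so that $a+b=n$ and $a-b\ge 1$. I would first observe that, because $G$ is subcubic, any vertex $v$ with $d_{\mathrm{in}}$ neighbours inside its own part and $d_{\mathrm{out}}$ neighbours in the other part satisfies $2d_{\mathrm{in}}\le d_{\mathrm{in}}+d_{\mathrm{out}}\le 3$; the unfriendly condition gives $d_{\mathrm{in}}\le d_{\mathrm{out}}$, hence $d_{\mathrm{in}}\le 1$. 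Thus both $G[A]$ and $G[B]$ have maximum degree at most one, so the spanning subgraph $G_1$ consisting of all edges lying inside a part is a matching. Consequently every eigenvalue of $G_1$ lies in $\{-1,0,1\}$, and in particular $\lambda_1(G_1)\le 1$ and $\lambda_n(G_1)\ge -1$.

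The complementary spanning subgraph $G_2$, carrying exactly the edges between $A$ and $B$, is bipartite with parts of sizes $a$ and $b$. Its biadjacency matrix has rank at most $b$, so $G_2$ has at most $b$ positive and at most $b$ negative eigenvalues; by the symmetry of the bipartite spectrum its zero eigenvalue therefore has multiplicity at least $n-2b=a-b\ge 1$, occupying (in nonincreasing order) every index $i$ with $b+1\le i\le a$. The next step is the index bookkeeping that ties this zero band to the median positions. Since $a\ge b+1$ we have $a\ge (n+1)/2\ge b+1$, and because $b+1$ is an integer with $b+1\le (n+1)/2$ while $a$ is an integer with $a\ge (n+1)/2$, I would conclude $b+1\le h\le l\le a$, where $h=\lfloor (n+1)/2\rfloor$ and $l=\lceil (n+1)/2\rceil$. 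Hence $\lambda_h(G_2)=\lambda_l(G_2)=0$.

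It then remains to combine these two facts through the edge-partition inequalities of Theorem \ref{thm3} applied to $\{E(G_1),E(G_2)\}$. For the upper bounds I would take $j=1$ in the first inequality to get $\lambda_h(G)\le \lambda_1(G_1)+\lambda_h(G_2)\le 1$ and likewise $\lambda_l(G)\le 1$; for the lower bounds I would take $j=n$ in the second inequality to get $\lambda_h(G)\ge \lambda_n(G_1)+\lambda_h(G_2)\ge -1$ and likewise $\lambda_l(G)\ge -1$. Together these give $|\lambda_h(G)|\le 1$ and $|\lambda_l(G)|\le 1$, whence $R(G)\le 1$.

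I expect the main obstacle to be the index arithmetic of the middle paragraph rather than any deep spectral input: one must verify carefully that the unbalanced hypothesis (and nothing weaker) places both median indices $h$ and $l$ inside the zero band $[b+1,a]$ of the crossing graph. The two structural observations — that unfriendliness plus subcubicity forces the inner graph to be a matching, and that imbalance forces a central band of zeros in the bipartite crossing graph — are what make Theorem \ref{thm3} bite; the floor/ceiling casework confirming $b+1\le h\le l\le a$ is the only place where an off-by-one error could creep in, so I would check the parities of $n$ explicitly there.
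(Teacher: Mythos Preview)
Your argument is correct. The key structural observation---that unfriendliness together with $\Delta\le 3$ forces $G[A]$ and $G[B]$ to be matchings---is exactly the right starting point, and your index arithmetic showing $b+1\le h\le l\le a$ is sound.

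However, your route differs from the one the paper attributes to Mohar. The paper indicates that this lemma is proved via the interlacing theorem (Theorem~\ref{thm2}), not the Weyl-type edge-partition inequalities (Theorem~\ref{thm3}) that you invoke. The interlacing argument is shorter: delete the smaller part $B$ (of size $b$) from $G$; the remaining graph $G[A]$ is a matching on $a$ vertices, so all its eigenvalues lie in $[-1,1]$. Interlacing then gives $\lambda_{b+1}(G)\le\lambda_1(G[A])\le 1$ and $\lambda_a(G)\ge\lambda_a(G[A])\ge -1$, and since $b+1\le h\le l\le a$ the conclusion follows. This avoids any analysis of the bipartite crossing graph $G_2$ and its zero band entirely. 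Your approach, by contrast, decomposes along edges rather than vertices and exploits the rank deficiency of the unbalanced bipartite piece; it is a perfectly valid alternative, and in fact it is precisely the mechanism the paper itself uses in its proof of Theorem~\ref{k23}. So what you lose in brevity here you gain in thematic consistency with the rest of the paper.
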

Finally,
we need the following lemma,  which was verified by Mohar in the proof of  \cite[Lemma 2.1]{Mohar2013}.
\begin{lem}\label{lem-odd}\cite{Mohar2013}
	For every subcubic graph $G$ of odd order, $R(G)\le 1$.
\end{lem}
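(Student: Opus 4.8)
The plan is to exploit the fact that odd order collapses the two median positions into a single one. Since $n$ is odd, $n+1$ is even, so $h=\lfloor\frac{n+1}{2}\rfloor=\lceil\frac{n+1}{2}\rceil=l$; writing $m=\frac{n+1}{2}$, we have $R(G)=|\lambda_m(G)|$, and it suffices to show $-1\le\lambda_m(G)\le 1$. The crucial extra leverage from odd order is that \emph{every} partition $\{A,B\}$ of $V(G)$ is unbalanced, because $|A|+|B|=n$ being odd forces $|A|\ne|B|$. Hence the shortest route is: invoke Lemma~\ref{thm1} to obtain an unfriendly partition of $G$, observe that it is automatically unbalanced, and apply Lemma~\ref{lem2} to conclude $R(G)\le 1$. (No circularity arises, since the interlacing proof behind Lemma~\ref{lem2} never uses the odd-order hypothesis.)

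Since this leans on Lemma~\ref{lem2}, I would also record the self-contained mechanism. Take an unfriendly partition $\{A,B\}$ with $|A|>|B|$ and set $k=|B|$, so that $k\le m-1$. The key structural step is that unfriendliness together with $\Delta(G)\le 3$ forces each vertex of $A$ to have at most one neighbor inside $A$: if $v\in A$ had two neighbors in $A$, the unfriendly condition would require at least two neighbors in $B$, giving $\deg(v)\ge 4$, a contradiction. Consequently $G[A]=G-B$ is a disjoint union of edges and isolated vertices, whose eigenvalues all lie in $\{-1,0,1\}$; in particular every eigenvalue of $G-B$ lies in $[-1,1]$. I would then transfer this bound to the median eigenvalue of $G$ through the interlacing theorem (Theorem~\ref{thm2}) applied to the removed set $B$ of size $k$: the lower bound is $\lambda_m(G)\ge\lambda_m(G-B)\ge -1$, valid because $m\le n-k$, and the upper bound is $\lambda_m(G)=\lambda_{(m-k)+k}(G)\le\lambda_{m-k}(G-B)\le 1$, valid because $1\le m-k$. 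Both index constraints reduce to $k\le m-1$, which is exactly what odd order supplies, so $-1\le\lambda_m(G)\le 1$ and $R(G)\le 1$.

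The main obstacle is the structural observation that an unfriendly partition of a subcubic graph induces only a matching (edges plus isolated vertices) on each side, pinning the spectrum of $G-B$ inside $[-1,1]$; once that is in hand the interlacing bookkeeping is routine. The odd-order hypothesis then plays a single, decisive role: it guarantees the strict inequality $|A|>|B|$, equivalently $k\le m-1$, which is precisely what keeps the eigenvalue indices $m-k$ and $m$ within the admissible range $\{1,\dots,n-k\}$ of the interlacing inequalities.
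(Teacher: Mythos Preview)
Your proposal is correct. The paper does not supply its own proof of this lemma; it simply records that the statement was verified by Mohar in the proof of \cite[Lemma~2.1]{Mohar2013} and takes it as known. Your argument is exactly the intended derivation from the surrounding lemmas the paper has already stated: combine Lemma~\ref{thm1} (existence of an unfriendly partition) with the observation that any partition of an odd set is unbalanced, and invoke Lemma~\ref{lem2}. The self-contained interlacing argument you spell out is precisely the mechanism behind Lemma~\ref{lem2}, and your index bookkeeping ($k\le m-1$, so $1\le m-k$ and $m\le n-k$) is correct.
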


\section{Proof of Theorems \ref{k23} and \ref{thm-main}.}
In this section, we give the proof of Theorems \ref{k23} and \ref{thm-main}.

\medskip
\begin{Proof of 1}
	Let $G$ be 	a subcubic graph  such that $K_{2,3}$ is a subgraph of $G$,
	where  $V(K_{2,3})=\{x_1, x_2, y_1, y_2, y_3\} \subseteq V(G)$
	with bipartite partition $\{x_1, x_2\}\cup \{y_1, y_2, y_3\}$.
	Take  an unfriendly partition $\{A,B\}$ of $V(G)$, as there exists one
	by Lemma \ref{thm1}.
	If $x_1\in A$ and $x_2\in B$, then
	any way of putting  $y_1,y_2$ and $y_3$ into either $A$ or $B$ can not build the unfriendly partition  $\{A,B\}$.  Thus we may assume that $x_1$ and $x_2$ belong to the same part, say $A$.
	Then $\{y_1,y_2,y_3\}\subseteq B$.
	
	Let $G_1=(V(G),E_1)$  and $G_2=(V(G),E(G)\setminus E_1)$ be two spanning subgraphs of $G$, where $E_1=E(A,B)$ consists of  all  edges with one end in $A$ and the other in $B$.   Then  $G_1$ is a bipartite graph with $N_{G_1}(x_1)=N_{G_1}(x_2)=\{y_1,y_2,y_3\}$.
	 Hence,  $\lambda_h(G_1)=0$ with where $ h=\left\lfloor\frac{n+1}{2}\right\rfloor$ by Lemma \ref{lem1}. In addition, since $\{A,B\}$ is an unfriendly partition of $G$, the graph $G_2$ consists of independent edges and isolated vertices, which implies that $\lambda_1(G_2)\le 1$.
	 Therefore, by Theorem \ref{thm3},
	$$\lambda_h(G)\le \lambda_h(G_1)+\lambda_1(G_2)\le 1,$$
	where $ h=\left\lfloor\frac{n+1}{2}\right\rfloor.$
		By a similar argument, we can also prove that $\lambda_{l}(G)\ge -1$ with  $l=\left\lceil\frac{n+1}{2}\right\rceil$.  Hence $R(G)\le 1$.
\end{Proof of 1}

\medskip
\begin{Proof of 2}
If the assertion of Theorem \ref{thm-main} holds for connected graphs, then it also holds for disconnected graphs by considering all the components. Thus we may assume that $G$ is  a  $K_4$-minor-free subcubic connected graph .
And by Lemma \ref{lem-odd}, we may assume that $|G|=n$ with even $n$.  We use the induction method on even $n$.

If $n=2$, then  $G=K_2$ and the assertion holds. In addition, if $n=4,$ then $G$ must be one of the five graphs: star $K_{1,3}$, 4-cycle $C_4$, kite $K_4-e$ and paw  which is a graph obtained by connecting a vertex  $v$ to one vertex of $K_3$. It is easy to see the second largest eigenvalue of the five graphs at least 1, i.e., $R(G)\le 1$. So the assertion holds.

Suppose the assertion holds for all the graphs of order less than $n$,  where $n\ge 6$. Now we consider the following two cases. 

\textbf{Case 1:}  There is  a cut vertex $v$ in $G$. Then  $G-v$  has a component $G_1$ of odd order. And $G_2:=G-v-V(G_1)$   has  even number of  vertices.  Denote by  $n_1(H)$   the number of eigenvalues of a graph $H$ that are larger than 1.  Since $|G_1|$ is odd,  $R(G_1)\le 1$ by Lemma \ref{lem-odd}, and  $n_1(G_1)\le\frac{|G_1|-1}{2}$.  Since $G$ is subcubic, $G_2$ has at most two component. If $G_2$ is connected or has two components of even order, then apply the inductive hypothesis to components of $G_2$; otherwise, apply Lemma \ref{lem-odd} to two components of odd order of $G_2$.
In all the cases above for $G_2$, we can obtain   $n_1(G_2)\le\frac{|G_2|-2}{2}$. Thus, $$n_1(G-v)=n_1(G_1)+n_1(G_2)\le \frac{|G_1|+|G_2|-3}{2}=\frac{n-4}{2},$$
which implies that   $\lambda_{\frac n2-1}(G-v)\le 1$.  Hence,  by Theorem \ref{thm2}, $\lambda_{\frac n2}(G)\le\lambda_{\frac n2-1}(G-v)\le 1$.
 By a similar argument, we can also prove that $\lambda_{\frac n2+1}(G)\ge -1$. Therefore, $R(G)\le 1$.

\medskip
\textbf{Case 2:} $G$ is a 2-connected graph. If $G$ is a cycle, then the assertion holds.  Thus we may assume that $G$ is not a cycle. Let $C$ be a  longest cycle in $G$.
Throughout this proof, we say a path lies in $C$ (outside of $C$, resp.) if all the edges of the path are in $C$ (are not in $C$, resp.).
Then there is a path $P$ with two end vertices $u, v\in V(C)$  such that  $P$  contains no edges in  $E(C)$.  Let $N(u)\cap V(C)=\{u_1,u_2\}$, and $N(v)\cap V(C)=\{v_1,v_2\}$. Moreover, there is a  $u_1,v_1$--path in $C$ that contains neither $u$ nor $v$.  
If 
$u_1=v_1$ and $u_2=v_2$, then $G=K_{2,3}$, since  $G$ is $K_4$-minor-free and $C$ is a longest cycle of $G$.  It is a contradiction.
Hence we may assume that $u_2\neq v_2$ and have the following  Claim 1.

\medskip
 {\bf Claim 1.} $G-\{u_2,v\}$ is not connected.
\begin{proof}
	We use the method of contradiction.
	Suppose that $G-\{u_2,v\}$ is  connected.
	Then there exists a path $Q$ from  $u$ to $v_2$ in $G-\{u_2,v\}$.
	Let $y$ denote the vertex of  $Q$ that is adjacent to $u$, which is either the vertex of $P$ (only happens when $|V(P)|>2$) or $u_1$.
	Thus there is a $K_4$-minor of $G$ on $\{u,v,v_2,y\}$, a contradiction that  $G$ is $K_4$-minor-free.
\end{proof}
Let $W$ be the set of vertices of the component of $G-\{u_2,v\}$ that contains $u$ (for example, 
 the  black vertices in Figure \ref{fg1}  represent all the vertices in $W$.). 

\begin{figure}[ht]
\centering
    \scalebox{.8}{
  \begin{tikzpicture}
  \draw(-1,1.73)--(-1,-1.73);
  \draw(-1,0.67)--(-0.33,0)--(-1,-0.67);
  \draw(-1.73,1)--(-1.73,-1);
  \draw(0,2)--(1.73,-1);\draw(1,1.73)--(2,0);

  \draw(0,0) circle (2) ;
  \filldraw[fill=white,draw=black](0,2) circle (.085)node at (0,2.3){\large$u_2$} ;
  \filldraw[fill=white,draw=black](2,0) circle (.085) ;
  \filldraw[fill=black,draw=black](-2,0) circle (.06) ;
  \filldraw[fill=white,draw=black](0,-2) circle (.085) node at (0,-2.3){\large$v_2$};
  \filldraw[fill=black,draw=black](-1.73,1) circle (.06) node at (-1.73,1.34){\large$u_1$};
  \filldraw[fill=black,draw=black](-1.73,0) circle (.06);
  \filldraw[fill=black,draw=black](-1,1.73) circle (.06) node at (-1,2.02){\large$u$};
  \filldraw[fill=white,draw=black](1.73,1) circle (.085) ;
  \filldraw[fill=white,draw=black](1,1.73) circle (.085) ;
  \filldraw[fill=black,draw=black](-1.73,-1) circle (.06) node at (-1.73,-1.34){\large$v_1$};
  \filldraw[fill=white,draw=black](-1,-1.73) circle (.085) node at (-1,-2.02){\large$v$};
  \filldraw[fill=white,draw=black](1.73,-1) circle (.085) ;
  \filldraw[fill=white,draw=black](1,-1.73) circle (.085) ;
  \filldraw[fill=black,draw=black](-1,0.67) circle (.06) ;
  \filldraw[fill=black,draw=black](-1,-0.67) circle (.06) ;
  \filldraw[fill=black,draw=black](-0.33,0) circle (.06) ;
  \filldraw[fill=white,draw=black](0.57,1) circle (.085) ;
  \filldraw[fill=white,draw=black](1.15,0) circle (.085) ;
  \end{tikzpicture}
  }
  \caption{An example for $G$}\label{fg1}
  \end{figure}
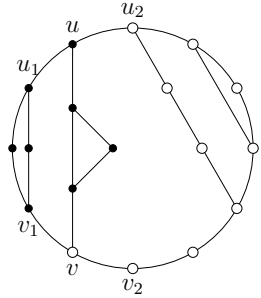

\medskip
{\bf Claim 2.}
If $u$  is adjacent to  $v$, then
$G[W-u]$ is connected and  a  component of $G-\{u,v\}$; if  $u$ is not adjacent to  $v$, then  there are  two connected components in
$G[W-u]$  which  are also two components of $G-\{u,v\}$.

\begin{proof}
	If  $u$ is adjacent to  $v$, then  $u$ has degree 1 in $G[W]$. Thus  $G[W-u]$ is still  connected and a component of $G-\{u,u_2,v\}$.
	
If  $u$ is not adjacent to  $v$, then  $|V(P)|>2$.
	Then $G[W-u]$ has two components. In fact, if   $G[W-u]$ is
	connected, then   there is a path from $u_1$ to $y$ in $G[W-u]$  with  $y\in N(u)\cap V(P)$. Hence there is a $K_4$-minor in $G$ on $\{u,v,u_1,y\}$,  which contradicts to that $G$ is $K_4$-minor-free. Therefore $G[W-u]$ is disconnected. Moreover, since the degree of $u$ in $G[W]$ is 2,  there are  exactly two  components in $G[W-u]$.
	
	Let $H_1$ be  the
	 component of $G[W-u]$ containing $u_1$ and  $H_2$ be the rest component of $G[W-u]$.
	    Then by the definition of $W$, $H_1$ and $H_2$ are also two components of $G-\{u,u_2,v\}$.
	Since $G$ is $K_4$-minor-free, there is no paths in    $G-\{u,v\}$ from $u_2$ to vertices of $H_1$ or $H_2$. Hence $H_1$ and $H_2$ are two components of $G-\{u,v\}$.
\end{proof}
If $|W|$ is even,  let $G_1:=G[W]$ and  $x:=u_2$; if $|W|$ is odd, let $G_1:=G[W-u]$ and  $x:=u$. Then $|G_1|$ is  even. Furthermore, by the  definition of $W$ and  Claim 2,  the components of $G_1$ are always the  components  of  $G-\{x,v\}$. Hence by the inductive hypothesis on $G_1$, we have $n_1(G_1)\le\frac{|G_1|-2}{2}$. In addition,
let  $G_2:=G-\{x,v\}-V(G_1)$. Then $G_2$ also has even number of vertices.
Since $u_2\neq v_2$, 
we have $v_2\in V(G_2)$; so $|G_2|\ge 2$.
Since  $G$ is 2-connected,  there are  at most two components in $G_2$. 
By Lemma \ref{lem-odd} and the inductive hypothesis, 
we can obtain $n_1(G_2)\le \frac{|G_2|-2}{2}$. 
Therefore, we have
 $$n_1(G-\{u_2,x\})\le n_1(G_1)+n_1(G_2)\le\frac{|G_1|+|G_2|-4}{2}=\frac{n-6}{2},$$ which implies that $\lambda_{\frac n2-2}(G-\{u_2,x\})\le 1$. By Theorem \ref{thm2}, $\lambda_{\frac n2}(G)\le\lambda_{\frac n2-2}(G-\{u_2,x\})\le 1$.

By a similar argument, we can also prove that $\lambda_{\frac n2+1}(G)\ge -1$. Therefore, $R(G)\le 1$, and we complete the proof.	
\end{Proof of 2}

\section*{Data Availability} No data was used for the research described in the article.

\subsection*{Acknowledgements}
The authors would be grateful to the referees for their valuable suggestions and comments.


\begin{thebibliography}{99}
	\bibitem{Aharoni1990}
	R. Aharoni, E.C. Milner, K. Prikry, Unfriendly partitions of a graph, J. Combin. Theory Ser. B 50 (1990) 1--10.
	
\bibitem{Benediktovich2014}
V.I. Benediktovich,
Mean eigenvalues of a subcubic outerplanar graph,  Dokl. Nats. Akad. Nauk Belarusi  58(4)(2014) 23--30.



\bibitem{C2010}
 D.M. Cvetkovi\'{c}, P. Rowlinson,  S. Simi\'{c}, An introduction to the theory of graph spectra, Cambridge University Press, Cambridge, 2010.

\bibitem{Dissaux2023}
T. Dissaux, G. Ducoffe, N. Nisse, S. Nivelle,  Treelength of series--parallel graphs,  Discrete Applied Mathematics, 341 (2023) 16--30.

\bibitem{Eppstein1992}
D. Eppstein, Parallel recognition of series--parallel graphs,  Information and Computation, 98(1)(1992) 41--55.

\bibitem{Fowler2010}
P.W. Fowler, T. Pisanski,   HOMO--LUMO maps for fullerenes, Acta Chim. Slov. 57(3)(2010) 513--517.

\bibitem{Fowler2010-2}
P.W. Fowler, T. Pisanski, HOMO--LUMO maps for chemical graphs, MATCH Commun. Math. Comput. Chem. 64(2)(2010) 373--390.

\bibitem{Jaklic2012}
G. Jakli\u{c}, P.W. Fowler, T. Pisanski, HL--index of a graph, Ars Math. Contemp. 5(1)(2012) 99--105.

\bibitem{Li2013}
X. Li, Y. Li, Y. Shi, I. Gutman, Note on the HOMO--LUMO index of graphs, MATCH Commun. Math. Comput. Chem. 70(1)(2013) 85--96.

\bibitem{Mohar2013}
B. Mohar, Median eigenvalues of bipartite planar graphs,  MATCH Commun. Math. Comput. Chem. 70(1)(2013) 79--84.

\bibitem{Mohar2015}
B. Mohar, Median eigenvalues and the HOMO--LUMO index of graphs, J. Combin. Theory Ser. B 112(2015) 78--92.



\bibitem{Mohar2016}
B. Mohar, Median eigenvalues of bipartite subcubic graphs,  Combin. Probab. Comput.   25(5)(2016) 768--790.

\bibitem{MT2015}
B. Mohar, B. Tayfeh-Rezaie,  Median eigenvalues of bipartite graphs,  J. Algebraic Combin.  41(3)(2015)  899--909.

\bibitem{Wu2018}
Y. Wu, Y. Yang, D. Ye,  A note on median eigenvalues of bipartite graphs, MATCH Commun. Math Comput. Chem.  80(3)(2018) 853--862.


\bibitem{Ye2017}
D. Ye, Y. Yang, B. Mandal, D.J. Klein,  Graph invertibility and median eigenvalues,  Linear Algebra Appl.  513(2017), 304--323.


\end{thebibliography}
\end{document}